\numberwithin{equation}{section}
\newtheorem{thm}{Theorem}[section]
\newtheorem{lem}[thm]{Lemma}
\theoremstyle{definition}
\newcommand{\I}{\mathbf 1}
\newcommand{\C}{\mathbb C}
\newcommand{\sO}{\mathcal O}
\newcommand{\nd}{\nobreakdash-\hspace{0pt}}
\DeclareMathOperator{\End}{End}
\DeclareMathOperator{\Mod}{Mod}
\DeclareMathOperator{\Rep}{Rep}
\title{\'Etale triviality of finite
equivariant vector bundles}
\author{Indranil Biswas}
\address{School of Mathematics, Tata Institute of Fundamental
Research, Homi Bhabha Road, Mumbai 400005, India}
\email{indranil@math.tifr.res.in}
\author{Peter O'Sullivan}
\address{Mathematical Sciences Institute, The Australian National University,
Canberra ACT 2601, Australia}
\email{peter.osullivan@anu.edu.au}
\begin{document}

%%%%%%%%%%%%%%%%%%%%%%%%%%%%%%%
% Title page
%%%%%%%%%%%%%%%%%%%%%%%%%%%%%%%

%\removeabove{}
%\removebetween{}
%\removebelow{}

\maketitle

\begin{prelims}

\DisplayAbstractInEnglish

\bigskip

\DisplayKeyWords

\medskip

\DisplayMSCclass

\bigskip

\languagesection{Fran\c{c}ais}

\bigskip

\DisplayTitleInFrench

\medskip

\DisplayAbstractInFrench

\end{prelims}

%%%%%%%%%%%%%%%%%%%%%
% Table of Contents
%%%%%%%%%%%%%%%%%%%%%

\newpage

\setcounter{tocdepth}{1}

\tableofcontents

%%%%%%%%%%%%%%%%%%%%%
% Content begins here
%%%%%%%%%%%%%%%%%%%%%

\section{Introduction}

Given a vector bundle $E$, and a polynomial $f(t)\,=\, \sum_{i=0}^n a_it^i$, where $a_i$ are nonnegative integers,
define the vector bundle
$f(E)\,:=\, \bigoplus_{i=0}^n (E^{\otimes i})^{\oplus a_i}$. A vector bundle $E$ is called finite if there are
two such distinct polynomials $f_1$ and $f_2$ satisfying the
condition that the vector bundles $f_1(E)$ and $f_2(E)$ are isomorphic. This notion was
introduced by Weil in \cite{We}. He proved that a vector bundle $E\, \longrightarrow\, X$
over a complex projective variety is finite if there is a
finite \'etale covering $\varpi\, :\, \widehat{X}\, \longrightarrow\, X$ such that
the pulled back vector bundle $\varpi^*E$ is trivial. Nori in his
thesis proved a converse of it. He proved that a vector bundle $E$ over a projective variety $X$,
defined over an algebraically closed field of characteristic zero, is finite if and only if
there is a finite \'etale covering $\varpi\, :\, \widehat{X}\, \longrightarrow\, X$ such that $\varpi^*E$ is trivial
\cite{No1}, \cite{No2}. Nori's proof involves reducing the problem to the case of curves by taking hyperplanes
in $X$. Note that there is a finite \'etale covering $\varpi\, :\, \widehat{X}\, \longrightarrow\, X$
such that $\varpi^*E$ is trivial if and only if $E$ admits a reduction of structure group to a finite group.

Attempts were made to prove the above theorem of Nori in the context of complex manifolds. 
A vector bundle $E$ over a compact K\"ahler manifold $X$ is finite if and only if
there is a finite \'etale covering $\varpi\, :\, \widehat{X}\, \longrightarrow\, X$ such that
$\varpi^*E$ is trivial \cite{BHS}. When $X$ is a compact complex manifold
admitting a Gauduchon astheno-K\"ahler metric, the same result was proved in \cite{BP}. We recall that
astheno-K\"ahler metrics, introduced in \cite{JY}, are weaker than K\"ahler metrics. More recently, in
\cite{Bi} this result was proved for holomorphic vector bundles over any compact complex manifold.

Our aim here is to prove a similar result in a more general setting:
\begin{itemize}
\item We allow $X$ to be a complex analytic space, so it can have singularities and may not be reduced,

\item $X$ is allowed to be noncompact, and

\item a complex Lie group $H$ acts on $X$, and the vector bundles considered are $H$-equivariant.
\end{itemize}

Let $H$ be a complex Lie group acting holomorphically on a complex analytic space $X$. An
$H$\nd equivariant holomorphic vector bundle ${\mathcal V}$ over $X$ is called
$H$\nd finite if there are two distinct polynomials $f_1$ and $f_2$, whose coefficients are nonnegative
integers, such that $f_1(E)$ and $f_2(E)$ are isomorphic as $H$\nd equivariant bundles,
and $H$\nd trivial if it is $H$\nd equivariantly isomorphic to ${\mathcal O}^{\oplus r}_X$ for some $r$.

We prove the following theorem (see Theorem \ref{t:main}):

\begin{thm}\label{ti}
Let $H$ be a complex Lie group acting holomorphically on a complex analytic space $X$
such that the restriction to $X_{\mathrm{red}}$ of every $H$\nd invariant regular function on $X$ is constant.
Then an $H$\nd equivariant holomorphic vector bundle over $X$ is $H$\nd finite
if and only if its pullback along some $H$\nd equivariant finite \'etale covering of $X$
is $H$\nd trivial.
\end{thm}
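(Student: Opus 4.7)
The plan is to establish both directions via a Tannakian approach, organizing the $H$-equivariant holomorphic vector bundles on $X$ into a rigid tensor category and identifying the finite objects with representations of a finite group scheme.

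The ``if'' direction is straightforward. Suppose $\varpi\colon \widehat{X} \to X$ is an $H$-equivariant finite étale covering with $\varpi^*E$ being $H$-trivial of rank $r$. After passing to an $H$-equivariant Galois closure, the deck group $G$ is finite, commutes with the $H$-action, and $E$ is obtained from a representation of $G$ in the $G\times H$-equivariant framework on $\widehat{X}$. Since the representation ring of the finite group $G$ is a finitely generated $\mathbb{Z}$-algebra, only finitely many isomorphism classes can appear among $E^{\otimes 0},E^{\otimes 1},E^{\otimes 2},\ldots$, which forces an identity $f_1(E)\cong f_2(E)$ with $f_1\ne f_2$.

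For the ``only if'' direction, the plan is to let $\mathcal{C}$ be the smallest strictly full subcategory of $H$-equivariant holomorphic vector bundles on $X$ containing $E$ and closed under direct sums, tensor products, duals, and direct summands. The hypothesis that every $H$-invariant regular function on $X$ restricts to a constant on $X_{\mathrm{red}}$ is precisely what should yield $\End_H(\mathcal{O}_X)=\C$, and more generally the kind of ``connectedness'' needed to produce a neutral fiber functor $\omega\colon \mathcal{C}\to\mathrm{Vect}_{\C}$. The $H$-finiteness of $E$ forces $\mathcal{C}$ to contain only finitely many indecomposable objects up to isomorphism, so that Tannakian reconstruction should exhibit an equivalence $\mathcal{C}\simeq \Rep(G)$ for some finite group scheme $G/\C$. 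The principal $G$-bundle classifying $\omega$ then yields the desired $H$-equivariant finite étale cover $\widehat{X}\to X$, and the pullback of $E$ to $\widehat{X}$ is trivial because the pullback of any $G$-representation to the total space of its associated torsor is trivial.

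The main obstacle will be making the Tannakian machinery work in this generality. Since $X$ may be non-reduced, singular, and noncompact, the ambient category of $H$-equivariant vector bundles is not abelian, so kernels and cokernels need not be locally free. A delicate choice of what to Tannakize, combined with a careful use of the hypothesis on $H$-invariant functions to control endomorphism algebras inside $\mathcal{C}$, is required in order to manufacture a genuine rigid abelian $\C$-linear tensor category with fiber functor. The final step---geometrically realising the abstract finite $G$-torsor as an actual $H$-equivariant analytic étale cover of $X$, rather than merely a pro-object or a torsor in the Tannakian formalism---is essentially a descent argument, but it has to respect the $H$-action throughout, which is where I expect the remaining technical difficulties to concentrate.
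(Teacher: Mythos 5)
The ``only if'' direction of your proposal has a genuine gap at exactly the point you flag and then leave open. The category $\mathcal{C}$ generated by $E$ inside $\Mod_H(X)$ is not abelian, and no actual mechanism is given for ``manufacturing a genuine rigid abelian $\C$-linear tensor category with fiber functor.'' Worse, your premise that the hypothesis yields $\End_H(\sO_X)\,=\,\C$ is false when $X$ is non-reduced: for a fat point (trivial $H$) every function restricts to a constant on $X_{\mathrm{red}}$, yet the invariant global functions form $\C[\epsilon]/(\epsilon^2)$. The hypothesis only makes $\End(\I)\,=\,H^0_H(X,\sO_X)$ a Henselian local $\C$-algebra with residue field $\C$ (Lemma~\ref{l:equiv}), so $\mathcal{C}$ itself can never be Tannakian over $\C$ in general. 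The paper never makes $\mathcal{C}$ abelian; instead it passes to the quotient $\overline{\mathcal{C}}\,=\,\mathcal{C}/\mathcal{J}$ by the unique maximal tensor ideal, which \emph{is} semisimple Tannakian, hence equivalent to $\Rep_{\C}(G)$ with $G$ finite by $H$-finiteness, and then invokes the splitting theorem of Andr\'e--Kahn and O'Sullivan to obtain a tensor functor $T\colon \Rep_{\C}(G)\longrightarrow \Mod_H(X)$ splitting the projection, with $T(V)$ $H$-isomorphic to $E$. The \'etale cover is then produced concretely as the $H$-equivariant finite \'etale $\sO_X$-algebra $T(\C[G])$, and triviality of the pullback follows by applying $T$ to the isomorphism $V_0\otimes_{\C}\C[G]\,\cong\,V\otimes_{\C}\C[G]$. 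Your plan---an abstract fiber functor $\omega$ to finite-dimensional vector spaces plus ``the torsor classifying $\omega$''---supplies no tensor functor from $\Rep_{\C}(G)$ into $\Mod_H(X)$ and hence no geometric object over $X$ to descend; this splitting step is the crux of the theorem, not a routine descent respecting the $H$-action.

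A secondary issue: in the ``if'' direction you assert that after passing to a Galois closure, $E$ ``is obtained from a representation of $G$.'' In this generality (non-reduced $X$, $\End(\I)$ only local rather than $\C$) that is itself a nontrivial claim; the paper proves it in Lemma~\ref{l:triv} by a group-cohomology lifting argument, using $H^1(G,J_{r+1})\,=\,0$ to conjugate the $G$-action on the free module $H^0_H(P,a^*{\mathcal V})$ over the local ring $H^0_H(P,\sO_P)$ into ${\rm GL}_n(\C)$, so that the action restricts to a genuine $\C$-subrepresentation $V$ with $E\,\cong\,P\times^G V$. Once that is granted, your use of the finiteness of the representation ring of $G$ over $\mathbb{Z}$ matches the paper's argument.
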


As mentioned above, when $X$ is a compact complex manifold, and $H\,=\, \{e\}$, Theorem
\ref{ti} was proved in \cite{Bi}. 
The proof of Theorem~\ref{ti} given here is an application
of the splitting theorem of \cite{AndKah}, \cite{O} and \cite{O19}.
It is simpler and much shorter than the proof given in \cite{Bi}. It resembles \cite{An}
in the sense that it constitutes a simple category theoretic proof of a result proved using
elaborate analytical methods.

\section{Finite bundles and reduction of structure group}

Throughout, all spaces are complex analytic, and all maps are holomorphic.

By a tensor category we mean a $\C$\nd linear category with $\C$\nd bilinear
tensor product, together with a unit $\!$ and associativity and commutativity constraints satisfying
the usual compatibilities \cite[p.~105, Definition~1.1]{DMOS}.
A tensor functor between tensor categories is a $\C$\nd linear functor together 
with constraints ensuring that unit and tensor product are preserved up to isomorphism \cite[pp.~113--114, Definition~1.8]{DMOS}.

Let $X$ be a complex analytic space.
We identify holomorphic vector bundles on $X$ with locally free $\sO_X$\nd modules of finite type.
The tensor category of holomorphic vector bundles over $X$ will be denoted by $\Mod(X)$.
If $X$ is a point, then $\Mod(X)$ is the category $\Mod(\C)$ of finite-dimensional $\C$\nd vector spaces.

Let $G$ be a finite group.
We write $\Rep_{\C}(G)$ for the tensor category of finite dimensional representations of $G$.
If we regard the algebra $\C[G]$ of functions on $G$ as the left regular representation of $G$, 
where any $g\,\in\, G$ sends any $w$ in $\C[G]$ to $w(g^{-1}\cdot)$,
then $\C[G]$ is a commutative $G$\nd algebra under pointwise multiplication.
The action of $G$ on a representation $V$ of $G$ is an isomorphism
\begin{equation}\label{e:Gact}
V_0 \otimes_{\C} \C[G] \,\stackrel{\sim}{\longrightarrow}\, V \otimes_{\C} \C[G]
\end{equation}
of modules over $\C[G]$ in $\Rep_{\C}(G)$, where $V_0$ is the underlying $\C$\nd vector space for $V$ 
equipped with the trivial action of $G$.

Let $H$ be a complex Lie group, and let $X$ be a complex analytic space with a holomorphic action of $H$. 
An $H$\nd equivariant vector bundle over $X$ is a vector bundle over $X$ equipped with
an action of $H$ above that on $X$. If ${\mathcal V}$ is such a vector bundle we write
\begin{equation*}
H^0_H(X,\,{\mathcal V})
\end{equation*}
for the $\C$\nd vector space of $H$\nd invariant cross-sections of ${\mathcal V}$, \emph{i.e.} those cross-sections 
whose pullback onto $H \times X$ along the projection to $X$ coincides with the pullback by the
map $H \times X\, \longrightarrow\, X$ giving the action of $H$ on $X$.

We denote by $\Mod_H(X)$ the tensor category of $H$\nd equivariant vector bundles over $X$, where a morphism
is a morphism of vector bundles compatible with the actions of $H$,
and the tensor product is the tensor product of vector bundles equipped with an appropriate action of $H$.
Given also $H'$ acting on $X'$, together with a homomorphism $$H'\,\longrightarrow\, H$$ and a morphism $X'
\,\longrightarrow\, X$ compatible with the actions,
pullback along $X'\,\longrightarrow\, X$ defines a tensor functor from $\Mod_H(X)$ to $\Mod_{H'}(X')$.
When $H$ is a finite group $\Gamma$ and $X$ is a point, the tensor category $\Mod_H(X)$ is $\Rep_{\C}(\Gamma)$.

Let $G$ be a finite group.
If $P$ is a holomorphic principal $G$\nd bundle over $X$, there is an action of $G$ on $P$ with $gp
\,=\, pg^{-1}$ for $g$ in $G$ and $p$ in $P$.
Thus we have pullback tensor functors from $\Rep_{\C}(G)$ and from $\Mod(X)$ to $\Mod_G(P)$.
Given $V$ in $\Rep_{\C}(G)$, a pair consisting of ${\mathcal V}$ in $\Mod(X)$ and a $G$\nd isomorphism from the pullback
of $V$ onto $P$ to the pullback of ${\mathcal V}$ onto $P$ exists, and is unique up to unique isomorphism:
for the existence reduce by arguing locally over $X$ to the case where $P$ is trivial.
We write
\begin{equation}\label{t1}
P \times^G V
\end{equation}
for ${\mathcal V}$. 
Formation of $P \times^G V$ commutes with pullback of principal $G$\nd bundles.

By an $H$\nd equivariant principal $G$\nd bundle over $X$ we mean a principal $G$\nd bundle $P$ over $X$ together 
with an action of $H$ on $P$ above that on $X$, such that $(hp)g\,=\,h(pg)$ for points $h$, $p$ and $g$ of 
respectively $H$, $P$ and $G$. There is an action of $H \times G$ on such a $P$ above the action of $H$ on $X$, 
with $(h,\,g)p \,=\, hpg^{-1}$. Again we have pullback tensor functors from $\Rep_{\C}(G)$ and from $\Mod_H(X)$ to 
$\Mod_{H \times G}(P)$. Given $V$ in $\Rep_{\C}(G)$, a pair consisting of ${\mathcal V}$ in $\Mod_H(X)$ and an $(H 
\times G)$\nd isomorphism from the pullback of $V$ onto $P$ to the pullback of ${\mathcal V}$ onto $P$ exists, and 
is unique up to unique isomorphism. Indeed ${\mathcal V}$ is $P \times^G V$ in \eqref{t1} with an appropriate 
$H$\nd equivariant structure. We then have a tensor functor $P \times^G -$ from $\Rep_{\C}(G)$ to $\Mod_H(X)$.

A commutative $\sO_X$\nd algebra ${\mathcal R}$ will be called finite \'etale if locally on $X$ it is a 
finite product of copies of $\sO_X$.
It is equivalent to require that the underlying $\sO_X$\nd module of ${\mathcal R}$ lie in $\Mod(X)$ and
that each fibre of ${\mathcal R}$ be a finite product of copies of $\C$.
Indeed since $\sO_{X,x}$ is Henselian for each $x$, a decomposition of the fibre of ${\mathcal R}$ at $x$ 
into a product of copies of $\C$ will lift to a decomposition of the stalk of ${\mathcal R}$ at $x$,
and the idempotents defining it will then extend to some neighbourhood of $x$. 

A morphism $X' \,\longrightarrow\, X$ of complex analytic spaces will be called finite \'etale if
locally on $X$ it is isomorphic to the projection onto $X$ of the product of $X$ with a finite set. 
A surjective finite \'etale morphism will also be called a finite \'etale covering of $X$.
The assignment to $X'$ with structural morphism $a$ of the commutative $\sO_X$\nd algebra $a_*\sO_{X'}$
defines an anti-equivalence from finite \'etale complex analytic spaces over $X$ to finite \'etale
$\sO_X$\nd algebras, and from $H$\nd equivariant finite \'etale complex analytic spaces over $X$ 
to $H$\nd equivariant finite \'etale $\sO_X$\nd algebras.

We say that $X$ is $H$\nd connected if it is not the disjoint union of two non-empty $H$\nd equivariant open 
subspaces. It is equivalent to require that $H^0_H(X,\,\sO_X)$ have no idempotent other than $0$ or $1$. If $X$ is 
$H$\nd connected, any $H$\nd equivariant vector bundle on $X$ has constant rank. 

\begin{lem}\label{l:equiv}
Let $H$ be a complex Lie group and $X$ be a non-empty complex analytic space with an action of $H$.
Then the following conditions are equivalent:
\begin{enumerate}
\item\label{i:const}
the restriction to $X_{\mathrm{red}}$ of any $H$\nd invariant regular function on $X$ is constant;
\item\label{i:local}
$H^0_H(X,\,\sO_X)$ is a local $\C$\nd algebra;
\item\label{i:hens}
$H^0_H(X,\,\sO_X)$ is a Henselian local $\C$\nd algebra with residue field $\C$;
\item\label{i:lim}
$H^0_H(X,\,\sO_X)$ is the limit of a sequence $(R_n)_{n\ge1}$ of commutative $\C$\nd algebras with surjective transition homomorphisms 
and with $R_1 \, = \, \C$, such that for each $n$ the kernel of $R_n \, \longrightarrow \, R_1$ is nilpotent of degree at most~$n$.
\end{enumerate}
If these conditions hold, then they hold with $X$ replaced by any $H$\nd connected $H$\nd equivariant finite \'etale cover.
\end{lem}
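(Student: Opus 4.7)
The plan is to prove the cycle $(3) \Rightarrow (2) \Rightarrow (1) \Rightarrow (4) \Rightarrow (3)$. Write $A \,=\, H^0_H(X, \sO_X)$. The implication $(3) \Rightarrow (2)$ is immediate. For $(2) \Rightarrow (1)$, I would observe that evaluation at any point $x \in X$ defines a surjective $\C$\nd algebra homomorphism $\mathrm{ev}_x \colon A \to \C$; if $A$ is local, its unique maximal ideal must coincide with $\ker(\mathrm{ev}_x)$ for every $x$, so $\mathrm{ev}_x \,=\, \mathrm{ev}_y$ as $\C$\nd algebra maps for all $x, y \in X$. Consequently $f(x) \,=\, f(y)$ for every $f \in A$, which is exactly condition~(1).

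The heart of the proof is $(1) \Rightarrow (4)$. Let $\mathcal{N} \subset \sO_X$ denote the nilradical and let $X^{(n)}$ be the closed subspace cut out by $\mathcal{N}^n$, so $X^{(1)} \,=\, X_{\mathrm{red}}$. My plan is to take $R_n$ to be the image of $A$ in $A_n \,:=\, H^0_H(X^{(n)}, \sO_{X^{(n)}})$. Then condition~(1) gives $R_1 \,=\, \C$, the transition maps $R_n \to R_{n-1}$ are surjective by construction, and the $n$-th power of $\ker(R_n \to R_1)$ lies in the image of $H^0(X, \mathcal{N}^n)$ in $R_n$, which vanishes. To establish $A \,=\, \varprojlim R_n$, I first identify $A$ with $\varprojlim A_n$ using the sheaf isomorphism $\sO_X \,\cong\, \varprojlim_n \sO_X/\mathcal{N}^n$ and the fact that $H^0_H(X, -)$ commutes with limits. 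The factorization $A \twoheadrightarrow R_n \hookrightarrow A_n$ then exhibits $A \to \varprojlim R_n$ as a section of the injection $\varprojlim R_n \hookrightarrow \varprojlim A_n \,=\, A$, so both are isomorphisms.

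For $(4) \Rightarrow (3)$, under (4) each $R_n$ is local with residue field $\C$ and nilpotent maximal ideal (non-units lie in $\ker(R_n \to R_1)$, and units have Neumann-series inverses), hence trivially Henselian; inverse limits along surjective local homomorphisms preserve this property, so $A$ is Henselian as well. For the final clause, let $a \colon Y \to X$ be an $H$\nd equivariant finite \'etale covering with $Y$ being $H$\nd connected. Any $f \in H^0_H(Y, \sO_Y) \,=\, H^0_H(X, a_*\sO_Y)$ defines an $H$\nd equivariant $\sO_X$\nd linear multiplication endomorphism of the locally free sheaf $a_*\sO_Y$ of some rank $n$, whose characteristic polynomial $P_f$ is monic of degree $n$ with coefficients in $A$; condition~(1) for $X$ reduces those coefficients modulo $\mathcal{N}$ to constants in $\C$. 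Cayley--Hamilton then makes $f|_{Y_{\mathrm{red}}}$ a holomorphic function taking values in the finite set of roots of $\overline{P_f} \in \C[T]$, hence locally constant. The $H$\nd connectedness of $Y_{\mathrm{red}}$, equivalent to that of $Y$ since reduction preserves the underlying topological space, then forces $f|_{Y_{\mathrm{red}}}$ to be constant, establishing~(1) for $Y$.

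The main obstacle I anticipate is verifying the sheaf identification $\sO_X \,\cong\, \varprojlim_n \sO_X/\mathcal{N}^n$ underlying the surjectivity in $A \,=\, \varprojlim R_n$. The nilradical $\mathcal{N}$ is not globally nilpotent on an analytic space, only stalk-wise; however, on any sufficiently small open $U$ the restriction $\mathcal{N}|_U$ is nilpotent of some order $N(U)$, so the inverse system $(\sO_X/\mathcal{N}^n)(U)$ stabilizes at $\sO_X(U)$ for $n \ge N(U)$, and this local identification sheafifies to the required global isomorphism.
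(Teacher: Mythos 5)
Your proposal is correct and follows essentially the same route as the paper: the same cycle of implications, the same choice of $R_n$ as the image of $H^0_H(X,\sO_X)$ in $H^0_H(X_n,\sO_{X_n})$ with $X_n$ cut out by $\mathcal N^n$ (using local nilpotence of the nilradical to identify the limit), and the same Cayley--Hamilton argument on $a_*\sO_{X'}$ for the final clause.

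The one place where you diverge, and where your justification is thinner than the paper's, is $(4)\Rightarrow(3)$: you invoke, without proof, that ``inverse limits along surjective local homomorphisms preserve Henselianity.'' This is in fact true (for a tower of Henselian local rings with surjective local transition maps, lift a coprime monic factorization over the residue field to each $R_n$; uniqueness of such lifted factorizations over any local ring makes the lifts compatible, so they assemble over the limit), but it is not an off-the-shelf citation and needs at least that one-line argument. In your situation it is even easier, since the kernels of $R_{n+1}\longrightarrow R_n$ are nilpotent, so factorizations or idempotents lift directly up the tower; this is exactly how the paper argues, lifting idempotents of $R'\otimes_R R_1$ through the $R'\otimes_R R_n$ for a finite free $R$\nd algebra $R'$. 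With that sentence expanded, your proof is complete.
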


\begin{proof}
Write $R \, = \, H^0_H(X,\,\sO_X)$.
If ${\mathcal N}$ is the nilradical of $\sO_X$, then for each $n$ the action of $H$ on $X$ induces one on
the complex analytic subspace $X_n$ of $X$ with the same underlying topological space and structure sheaf
$\sO_X/{\mathcal N}^n$,
with $X_1 \, = \, X_{\mathrm{red}}$.
Since ${\mathcal N}$ is locally nilpotent on $X$,
\begin{equation*}
R \,=\, \lim_n H^0_H(X_n,\,\sO_{X_n}),
\end{equation*}
where the kernel of $H^0_H(X_n,\,\sO_{X_n})\,\longrightarrow\, H^0_H(X_1,\,\sO_{X_1})$ is nilpotent of degree
at most $n$.
Thus \eqref{i:const} implies \eqref{i:lim} with $R_n$ the image of
\begin{equation*}
R \,\longrightarrow\, H^0_H(X_n,\,\sO_{X_n}).
\end{equation*}
If \eqref{i:lim} holds, then any finite commutative $R$\nd algebra $R'$ which is a free $R$\nd module is the limit of 
the $R'{}\!_n \, = \, R' \otimes_R R_n$.
Since the transition homomorphisms of $(R'{}\!_n)$ are surjective with nilpotent kernel, it follows that any idempotent of $R'{}\!_1$
lifts uniquely to an idempotent of each $R'{}\!_n$, and hence to $R'$, so that \eqref{i:hens} holds.
That \eqref{i:hens} implies \eqref{i:local} is clear.
If \eqref{i:local} holds, then the evaluation homomorphism $R \, \longrightarrow \, \C$ at each point of $X$
has kernel the unique maximal ideal of $R$, so that \eqref{i:const} holds.

Let $X'$ be an $H$\nd connected $H$\nd equivariant finite \'etale cover of $X$. 
Then the push forward $a_*\sO_{X'}$ along $a:X' \, \longrightarrow \, X$ is locally free of constant rank over $X$. 
For any element $\alpha$ of $H^0_H(X',\,\sO_{X'})$, the Cayley--Hamilton theorem applied to the endomorphism of $a_*\sO_{X'}$ induced by $\alpha$
thus shows that $\alpha$ is annihilated by a monic polynomial with coefficients in $R$.
Suppose that \eqref{i:const} holds.
Then the restriction $\overline{\alpha}$ of any $\alpha$ to $X'{}\!_\mathrm{red}$ is annihilated by a monic polynomial 
with coefficients in $\C$.
Since $X'{}\!_\mathrm{red}$ is reduced and $H$\nd connected, it follows that $\overline{\alpha}$ lies in $\C$.
Thus \eqref{i:const} holds with $X'$ for $X$. 
\end{proof}

Let $X$ be complex analytic space with an action of $H$.
If $Y$ and $Z$ are $H$\nd 
equivariant finite \'etale coverings of $X$, and $Y$ is $H$\nd connected, then any two $H$\nd equivariant morphisms from $Y$ to $Z$ over $X$
which send a given point $y$ of $Y$ to the same point of $Z$ actually coincide over the whole of $Y$, because their 
equaliser is an open and closed $H$\nd equivariant subset of $Y$ containing $y$.

Suppose that $X$ is $H$\nd connected, and let $X'$ be an $H$\nd equivariant finite \'etale covering of $X$.
If $x_1,\,x_2,\, \ldots ,\, x_n$ are the points of $X'$ above the point $x$ of $X$, write
$\mathcal P$ for the $H$\nd connected component
that contains the point
\begin{equation*}
p \,= \,(x_1,\,x_2,\, \ldots ,\,x_n).
\end{equation*}
of the fibre product over $X$ of $n$ copies of $X'$.
For each point $q$ of $\mathcal P$ above $x$, there exists an $H$\nd equivariant endomorphism of $\mathcal P$ over $X$
sending $p$ to $q$, because for each $i$ there exists an $H$\nd equivariant morphism from $\mathcal P$ to $X'$ over $X$
sending $p$ to $x_i$.
Since $\mathcal P$ is $H$\nd connected, such a endomorphism is unique, and is an automorphism.
The group $G$ of $H$\nd equivariant automorphisms of $\mathcal P$ over $X$ thus acts simply transitively 
on the fibre of $\mathcal P$ above $x$ and hence on every fibre.
Using the inverse involution of the finite group $G$, we obtain an $H$\nd connected $H$\nd equivariant principal
$G$\nd bundle $\mathcal P$ from which there exists an $H$\nd equivariant morphism to $X'$ over $X$.

An $H$\nd equivariant vector bundle over $X$
will be called $H$\nd trivial if it is $H$\nd isomorphic to the pullback onto $X$ of a finite-dimensional
$\C$\nd vector space $V$ on which $H$ acts trivially. 

\begin{lem}\label{l:triv}
Let $H$ be a complex Lie group and $X$ be a non-empty complex analytic space with an action of $H$ such that
the equivalent conditions of Lemma~\ref{l:equiv} hold.
Then the following conditions on an $H$\nd equivariant vector bundle ${\mathcal V}$ over $X$ are equivalent:
\begin{enumerate}
\item\label{i:cov}
There exists an $H$\nd equivariant finite \'etale covering $X'$ of $X$ such that the pullback of
${\mathcal V}$ onto $X'$ is $H$\nd trivial.

\item\label{i:bun}
The $H$\nd equivariant vector bundle ${\mathcal V}$ is $H$\nd isomorphic to $P\times^G V$ for some finite group $G$, finite-dimensional representation $V$ of $G$,
and $H$\nd equivariant principal $G$\nd bundle $P$ over $X$. 
\end{enumerate}
\end{lem}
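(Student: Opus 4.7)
My plan for $(2)\Rightarrow(1)$ is immediate: take $X':=P$, which is an $H$\nd equivariant finite \'etale covering since $G$ is finite, and observe that by the construction of $P\times^G V$ from the earlier discussion the pullback $(P\times^G V)|_P$ is $(H\times G)$\nd isomorphic to $V\otimes_\C\sO_P$; forgetting the $G$\nd structure, this is $H$\nd trivial since $V$ carries trivial $H$\nd action.

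For $(1)\Rightarrow(2)$, the strategy is to replace $X'$ by a ``Galois'' $H$\nd connected principal $G$\nd bundle and then extract a $G$\nd representation via Henselianness of the ring of $H$\nd invariant functions. Since Lemma~\ref{l:equiv}\eqref{i:local} forces $X$ to be $H$\nd connected, I may assume $X'$ is $H$\nd connected by passing to an $H$\nd connected component. Applying the construction preceding the lemma yields an $H$\nd connected $H$\nd equivariant principal $G$\nd bundle $\pi\colon\mathcal{P}\to X$ (for some finite group $G$) admitting an $H$\nd equivariant $X$\nd morphism $\mathcal{P}\to X'$; then $\pi^*\mathcal{V}$ is still $H$\nd trivial. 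By Lemma~\ref{l:equiv} applied to $\mathcal{P}$, the ring $R:=H^0_H(\mathcal{P},\sO_\mathcal{P})$ is a Henselian local $\C$\nd algebra with residue field $\C$; let $\mathfrak{m}$ be its maximal ideal.

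Consider $M:=H^0_H(\mathcal{P},\pi^*\mathcal{V})$, a finite free $R$\nd module carrying a $G$\nd action semi-linear over the $G$\nd action on $R$. The quotient $V:=M/\mathfrak{m}M$ inherits a $G$\nd representation structure ($\mathfrak{m}$, being the unique maximal ideal, is $G$\nd invariant, and $G$ acts trivially on the residue field $\C$). Averaging an arbitrary $\C$\nd linear splitting of $M\twoheadrightarrow V$ over $G$, which is legitimate since $|G|^{-1}\in\C$, produces a $G$\nd equivariant $\C$\nd linear splitting $s\colon V\to M$; extending $R$\nd linearly gives $\tilde s\colon V\otimes_\C R\to M$, $v\otimes r\mapsto r\cdot s(v)$, and semi-linearity of the $G$\nd action on $M$ makes $\tilde s$ $G$\nd equivariant for the diagonal action on $V\otimes_\C R$. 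As $\tilde s$ reduces to the identity modulo $\mathfrak{m}$, Nakayama's lemma in the local ring $R$ shows $\tilde s$ is surjective between finite free modules of equal rank, hence an isomorphism.

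A direct $\Hom$\nd computation shows that $H^0_H$ induces an equivalence between $H$\nd trivial $(H\times G)$\nd equivariant bundles on $\mathcal{P}$ and finite free $R$\nd modules with semi-linear $G$\nd action; under this equivalence, $\tilde s$ corresponds to an $(H\times G)$\nd isomorphism $\pi^*\mathcal{V}\cong V\otimes_\C\sO_\mathcal{P}$ with $G$ acting diagonally. Standard descent along the principal $G$\nd bundle $\pi$ then identifies $\mathcal{V}$ with $\mathcal{P}\times^G V$, completing the proof. The main obstacle, to my mind, is the combination of averaging ($|G|^{-1}\in\C$) with Nakayama in $R$ to produce the $G$\nd equivariant isomorphism $V\otimes_\C R\cong M$; the remaining steps are equivalence-of-categories bookkeeping and standard descent for finite $G$\nd torsors.
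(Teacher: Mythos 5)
Your proof is correct, and its skeleton matches the paper's: reduce to an $H$\nd connected ``Galois closure'' $\mathcal P$ with finite group $G$ mapping to $X'$ over $X$, pass to the free module $M=H^0_H(\mathcal P,\pi^*\mathcal V)$ over $R=H^0_H(\mathcal P,\sO_{\mathcal P})$, and exhibit an $(H\times G)$\nd equivariant isomorphism $V\otimes_\C\sO_{\mathcal P}\cong\pi^*\mathcal V$, which by the uniqueness in the definition of $P\times^G V$ is exactly condition~\eqref{i:bun}; the implication \eqref{i:bun}$\Rightarrow$\eqref{i:cov} with $X'=P$ is the same in both. Where you genuinely diverge is the central step producing the $\C$\nd form $V$. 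The paper chooses a basis of $M$, encodes the $G$\nd action by $\rho\colon G\to\mathrm{GL}_n(R)$, and conjugates $\rho$ into $\mathrm{GL}_n(\C)$ by successive lifting along the pro\nd nilpotent tower $(R_r)$ of condition~\eqref{i:lim} of Lemma~\ref{l:equiv}, the obstruction at each stage lying in $H^1(G,J_{r+1})=0$ because $J_{r+1}$ is the additive group of a $\C$\nd vector space and $G$ is finite. You instead use only that $R$ is local with residue field $\C$ (condition~\eqref{i:hens}): reduce $M$ modulo $\mathfrak m$ to get the $G$\nd representation $V$, average a splitting over $G$ (legitimate since $|G|^{-1}\in\C$), and conclude via Nakayama and equality of ranks that $V\otimes_\C R\to M$ is an isomorphism. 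Both mechanisms rest on the same underlying fact (invertibility of $|G|$ in $\C$), but yours avoids the tower and the cocycle bookkeeping, and it treats explicitly the point that the $G$\nd action on $M$ is only semi-linear over the (possibly nontrivial) $G$\nd action on $R$, so that the correct comparison object is $V\otimes_\C R$ with the twisted diagonal action --- precisely $H^0_H$ of the pullback of $V$; the paper's description of $\rho$ as a homomorphism glosses over this semi-linearity. One small overstatement on your side: you do not need, and should not assert without proof, a full equivalence between $H$\nd trivial $(H\times G)$\nd equivariant bundles and finite free $R$\nd modules with semi-linear $G$\nd action (essential surjectivity is essentially the statement being proved); all you use is full faithfulness of $H^0_H$ on $H$\nd trivial bundles, which is the direct computation $\Hom_H(V_1\otimes_\C\sO_{\mathcal P},\,V_2\otimes_\C\sO_{\mathcal P})=\Hom_\C(V_1,V_2)\otimes_\C R=\Hom_R(V_1\otimes_\C R,\,V_2\otimes_\C R)$, together with the observation that $G$\nd equivariance of the module map forces $(H\times G)$\nd equivariance of the bundle map.
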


\begin{proof}
If \eqref{i:bun} holds then \eqref{i:cov} holds with $X' = P$.

Conversely suppose that \eqref{i:cov} holds. Since $X$ is $H$\nd connected, there is 
as above a finite group $G$ and an $H$\nd connected $H$\nd equivariant principal $G$\nd bundle $P$ (denoted above 
by $\mathcal P$) over $X$ with an $H$\nd equivariant morphism to $X'$ over $X$. The pullback $a^*{\mathcal V}$ of 
${\mathcal V}$ along $a\,:\,P \,\longrightarrow\, X$ is thus $H$\nd trivial. To see that \eqref{i:bun} holds, it 
is to be shown that for some finite-dimensional representation $V$ of $G$ there exists an $(H \times G)$\nd equivariant isomorphism from the pullback $V \otimes_{\C} 
\sO_P$ of $V$ onto $P$ to $a^*{\mathcal V}$.

The equivalent conditions of Lemma~\ref{l:equiv} hold with $X$ replaced by $P$.
It follows that
\begin{equation*}
R \,=\, H^0_H(P,\,\sO_P)
\end{equation*}
is the limit of a sequence $(R_n)_{n\ge1}$ of $\C$\nd algebras as in \eqref{i:lim} of Lemma~\ref{l:equiv}.
Let
\begin{equation*}
\rho\,:\,G \,\longrightarrow\, {\rm GL}_n(R)
\end{equation*}
be a group homomorphism, and write $\rho_r$ for $\rho$ followed by the projection onto ${\rm GL}_n(R_r)$.
The group $G$ acts on the kernel $J_{r+1}$ of the projection from ${\rm GL}_n(R_{r+1})$ to ${\rm GL}_n(R_r)$, with $g$ in $G$
acting as conjugation by the element $\rho_1(g)$ of the subgroup ${\rm GL}_n(\C)$ of ${\rm GL}_n(R_{r+1})$.
Since $G$ is finite and $J_{r+1}$ is the additive group of a $\C$\nd vector space, we have
\begin{equation*}
H^1(G,\,J_{r+1})\, =\, 0.
\end{equation*}
Thus there is an element $\theta \,=\, (\theta_r)$ of ${\rm GL}_n(R) \,=\, \lim_r {\rm GL}_n(R_r)$ such that the 
conjugate of $\rho$ by $\theta$ factors through ${\rm GL}_n(\C)$, because the obstruction to lifting a given 
$\theta_r$ to a $\theta_{r+1}$ lies in $H^1(G,\,J_{r+1})$.

The $G$\nd equivariant structure of $a^*{\mathcal V}$ defines an action of $G$ on the finitely generated
free $R$\nd module $H^0_H(P,\,a^*{\mathcal V})$. Choosing a basis
$e_1,\, e_2,\, \cdots ,\,e_n$ of $H^0_H(P,\,a^*{\mathcal V})$ thus defines a homomorphism $\rho$ as above.
If we modify the basis by a $\theta$ in ${\rm GL}_n(R)$ such that the conjugate of $\rho$ by $\theta$ factors through
${\rm GL}_n(\C)$,
then the action of $G$ on $H^0_H(P,\,a^*{\mathcal V})$ restricts to an action on the $\C$\nd vector subspace $V$
generated by the $e_i$.
The embedding of $V$ then induces the required $(H \times G)$\nd equivariant isomorphism.
\end{proof}

Let ${\mathcal C}$ be a tensor category which is rigid (\emph{i.e.} every object has a dual).
Then the trace
\begin{equation*}
{\rm tr}(e) \,\in\, \End_{\mathcal C}(\I)
\end{equation*}
of an endomorphism $e$ in ${\mathcal C}$ is defined.
If the $\C$\nd algebra $\End_{\mathcal C}(\I)$ is local,
then the morphisms $r\,:\,A \,\longrightarrow\, B$ in
${\mathcal C}$ for which ${\rm tr}(s \circ r)$ lies in the maximal ideal of $\End_{\mathcal C}(\I)$
for every $s\,:\,B \,\longrightarrow\, A$ form the unique maximal tensor ideal ${\mathcal J}$ of
${\mathcal C}$ \cite[p.~73]{O19}. We then write
\begin{equation}\label{oc}
\overline{\mathcal C}
\end{equation}
for the quotient ${\mathcal C}/{\mathcal J}$.
It has the same objects as ${\mathcal C}$, and there is a projection ${\mathcal C}\,\longrightarrow\,
\overline{\mathcal C}$ which is the identity on objects and is a full tensor functor with kernel ${\mathcal J}$.

The group algebra of the symmetric group of degree $d$ acts on the $d$-th tensor power of every object of ${\mathcal C}$,
and if ${\mathcal C}$ is pseudo-abelian
we may define for example the $d$-th exterior power as the image of the antisymmetrising idempotent.

Suppose that ${\mathcal C}$ is essentially small (\emph{i.e.} has a small skeleton) and pseudo-abelian, that the 
$\C$\nd algebra $\End_{\mathcal C}(\I)$ is Henselian local with residue field $\C$, and that for every object of 
${\mathcal C}$ some exterior power is $0$. Then $\overline{\mathcal C}$ is a semisimple Tannakian category over 
$\C$ (\cite[Proposition 11.2(i)]{O19} and \cite[p.~165, Th\'eor\`eme 7.1]{De90}), and the projection ${\mathcal 
C}\,\longrightarrow\, \overline{\mathcal C}$ reflects isomorphisms \cite[Proposition 11.1(ii)]{O19}. 
It follows that $\mathcal C$ has the Krull--Schmidt property (\emph{i.e.} the commutative monoid under direct sum of isomorphism classes 
of objects of $\mathcal C$ is freely generated by the classes of the indecomposable objects) because $\overline{\mathcal C}$ does.
The splitting theorem for ${\mathcal C}$ (\cite[p.~225, Th\'eor\`eme 16.1.1(a)]{AndKah}, \cite[Theorem~1.1]{O}, \cite[Theorem~11.7]{O19}) states 
that the projection has a right quasi-inverse, \emph{i.e.} a tensor functor 
$\overline{\mathcal C}\,\longrightarrow\,{\mathcal C}$ with the composite
\[
\overline{\mathcal C}\,\longrightarrow\,{\mathcal C}\,\longrightarrow\, \overline{\mathcal C}
\]
tensor isomorphic to the identity. In \cite{AndKah} and \cite{O} 
this result is proved under the stricter hypothesis that $\End_{\mathcal C}(\I)\,= \,\C$. A reader who wishes to 
confine attention to this case may replace the hypothesis on $H$ and $X$ in Theorem~\ref{t:main} by the stricter 
one that $H^0_H(X,\,\sO_X) \, = \, \C$.
However it is still convenient to prove Lemmas~\ref{l:equiv} and \ref{l:triv}
in the generality given, because unless $X$ is reduced the condition $H^0_H(X,\,\sO_X) \, = \, \C$ is not stable under 
passage to $H$\nd connected $H$\nd equivariant finite \'etale covers.

Let $H$ and $X$ be as in Lemma~\ref{l:triv}.
Then $X$ is $H$\nd connected so that every $\mathcal V$ in $\Mod_H(X)$ has constant rank.
Since $\End(\I)$ in $\Mod_H(X)$ is $H^0_H(X,\,\sO_X)$, it thus follows from \eqref{i:hens} of Lemma~\ref{l:equiv} that the above 
conditions on $\mathcal C$ are satisfied when $\mathcal C$ is $\Mod_H(X)$.

Reducing using \eqref{i:const} of Lemma~\ref{l:triv} to the $H$\nd trivial case shows that 
the full subcategory of $\Mod_H(X)$ consisting of the $\mathcal V$ satisfying the conditions of Lemma~\ref{l:triv} 
is stable under the formation of tensor products, duals, direct sums and (since $\End(\I)$ is local) direct summands.
When $X$ is reduced, the full tensor subcategory of such $\mathcal V$ is Tannakian over $\C$, with
each point of $X$ defining a fibre functor,
because by Lemma~\ref{l:equiv} we then have $\End(\I) \, = \, \C$ in $\Mod_H(X')$ for any $H$\nd connected $H$\nd equivariant 
\'etale cover $X'$ of $X$, so that any morphism between $H$\nd trivial objects in $\Mod_H(X')$ factors as the projection
onto a direct summand followed by the embedding of a direct summand.

If ${\mathcal V}$ is an $H$\nd equivariant vector bundle over $X$ and $f(t)\,=\,\sum_{i=0}^n 
a_it^i$ is a polynomial whose coefficients $a_i$ are nonnegative integers, define the $H$\nd equivariant vector 
bundle $f({\mathcal V})$ as $\bigoplus_{i=0}^n ({\mathcal V}^{\otimes i})^{\oplus a_i}$; note that ${\mathcal 
V}^{\otimes 0}\,=\,{\mathcal O}_X$ and $({\mathcal V}^{\otimes i})^{\oplus 0}\,=\, 0$. Then ${\mathcal V}$ will 
be called $H$\nd finite if there exist distinct polynomials $f_1$ and $f_2$ with $f_1({\mathcal V})$ $H$\nd 
isomorphic to $f_2({\mathcal V})$.
Since $\Mod_H(X)$ has the Krull--Schmidt property, it is equivalent to require that the ${\mathcal V}^{\otimes n}$
should contain only finitely many non-isomorphic indecomposable direct summands in $\Mod_H(X)$.
Thus the full subcategory of $H$\nd finite $\mathcal V$ in $\Mod_H(X)$ is stable under tensor products, duals, direct sums and direct summands.

\begin{thm}\label{t:main}
Let $H$ and $X$ be as in Lemma~\ref{l:triv}.
Then an $H$\nd equivariant vector bundle ${\mathcal V}$ over $X$ is $H$\nd finite if and only if it satisfies
the equivalent conditions of Lemma~\ref{l:triv}.
\end{thm}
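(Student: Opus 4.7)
The easy direction is short: if $\mathcal V \,\cong\, P \times^G V$ with $G$ finite, then $V$ is finite as a $G$-representation (its tensor powers draw only from the finitely many irreducibles of $G$, so Krull--Schmidt in $\Rep_\C(G)$ yields $f_1(V) \cong f_2(V)$ for distinct $f_1,f_2$), and the tensor functor $P \times^G -$ pushes this relation forward to $\Mod_H(X)$, so $\mathcal V$ is $H$-finite.

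For the converse, given an $H$-finite $\mathcal V$, I would let $\mathcal C \,\subset\, \Mod_H(X)$ be the smallest full tensor subcategory containing $\mathcal V$ and closed under duals, finite direct sums, and direct summands. As explained just before the theorem, $\mathcal C$ is essentially small, rigid, and pseudo-abelian; $\End_{\mathcal C}(\I) \,=\, H^0_H(X,\mathcal O_X)$ is Henselian local with residue field $\C$ by Lemma~\ref{l:equiv}\eqref{i:hens}; and every object has constant rank so some exterior power vanishes. Hence the splitting theorem recalled in the excerpt applies and produces a tensor functor $s\,:\,\overline{\mathcal C} \,\longrightarrow\, \mathcal C$ right quasi-inverse to the projection, with $\overline{\mathcal C}$ semisimple Tannakian over $\C$.

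Next comes the Tannakian step. Choose a fibre functor $\omega$ on $\overline{\mathcal C}$ and let $\mathcal D \,\subset\, \overline{\mathcal C}$ be the Tannakian subcategory generated by $\overline{\mathcal V}$. Then $\omega$ identifies $\mathcal D$ with $\Rep_\C(G)$ for a reductive $\C$-group $G$ acting faithfully on $V \,=\, \omega(\overline{\mathcal V})$. The $H$-finiteness of $\mathcal V$, combined with Krull--Schmidt in $\mathcal C$ (which holds because $\overline{\mathcal C}$ is semisimple and the projection reflects isomorphisms), forces the tensor powers of $V$ to contain only finitely many irreducible $G$-summands up to isomorphism; since $V$ is faithful, these exhaust $\Rep_\C(G)$. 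A reductive $\C$-group with only finitely many irreducible representations must be finite (otherwise its identity component would contain a positive-rank torus, contributing infinitely many characters), so $G$ is a finite group.

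Finally, I would manufacture the principal bundle. Consider the composite $F\,:\, \Rep_\C(G) \,\simeq\, \mathcal D \,\hookrightarrow\, \overline{\mathcal C} \,\longrightarrow\, \mathcal C \,\hookrightarrow\, \Mod_H(X)$, which satisfies $F(V) \,\cong\, \mathcal V$, and set $\mathcal R \,=\, F(\C[G])$. Applying $F$ to the commutative algebra $\C[G]$ makes $\mathcal R$ a commutative $\mathcal O_X$-algebra in $\Mod_H(X)$. Composing $F$ with the fibre at any point $x \in X$ yields a tensor functor $\Rep_\C(G)\,\longrightarrow\,\Mod(\C)$, which by uniqueness of fibre functors of $\Rep_\C(G)$ over $\C$ is isomorphic to the forgetful functor; hence the fibre of $\mathcal R$ at $x$ is $\C[G]$ as a $\C$-algebra, a product of copies of $\C$. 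By the criterion for finite étale algebras recalled in the excerpt, $\mathcal R$ is finite étale and corresponds to an $H$-equivariant finite étale covering $a\,:\,P\,\longrightarrow\,X$. Applying $F$ to the isomorphism~\eqref{e:Gact} produces $F(V)\otimes_{\mathcal O_X}\mathcal R \,\cong\, V_0 \otimes_\C \mathcal R$ as $\mathcal R$-modules, which translates precisely to $a^*\mathcal V$ being $H$-trivial and verifies condition~\eqref{i:cov} of Lemma~\ref{l:triv}. I expect the main obstacle to be this Tannakian step: correctly translating $H$-finiteness data from $\mathcal C$ through the splitting and the fibre functor into a finiteness statement for the Tannakian group $G$, and then using reductivity in characteristic zero to conclude that $G$ is a finite group.
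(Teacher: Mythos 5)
Your proposal follows the paper's route almost step for step: the easy direction via finiteness of the representation ring of a finite group, and the converse via the splitting theorem applied to the pseudo-abelian rigid tensor subcategory $\mathcal C$ generated by $\mathcal V$, identification of the semisimple quotient with $\Rep_{\C}(G)$, transport of $\C[G]$ and of the isomorphism \eqref{e:Gact} through the resulting tensor functor to obtain an $H$\nd equivariant finite \'etale algebra and hence a trivializing covering, exactly as in the paper's passage from \eqref{e:TGact} to condition \eqref{i:cov} of Lemma~\ref{l:triv}. (One small point you assert without justification, that $F(V)$ is $H$\nd isomorphic to $\mathcal V$, needs the paper's one\nd line argument that the projection $\mathcal C \longrightarrow \overline{\mathcal C}$ is full and reflects isomorphisms.)

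The one step that does not hold as written is in your Tannakian step: the claim that, because $V$ is faithful, the irreducible constituents of the pure tensor powers $V^{\otimes n}$ exhaust $\Rep_{\C}(G)$. At that point $G$ is only known to be reductive, and for a non\nd finite reductive group the claim is false: for $G \,=\, {\rm GL}_1$ with its standard character, the tensor powers contain only the nonnegative characters. The general fact \cite[Proposition 2.20]{DMOS} is that every irreducible occurs in some mixed power $V^{\otimes n} \otimes (V^{\vee})^{\otimes m}$, so you must control mixed powers, not just $V^{\otimes n}$; as stated, your deduction that $\Rep_{\C}(G)$ has finitely many irreducibles is unsupported. The repair is immediate and is in effect what the paper does: the $H$\nd finite objects form a full subcategory stable under duals, direct sums, tensor products and direct summands (remark preceding the theorem), so $\mathcal V \oplus \mathcal V^{\vee}$ is $H$\nd finite, every indecomposable of $\mathcal C$ is a direct summand of a tensor power of $\mathcal V \oplus \mathcal V^{\vee}$, hence $\mathcal C$, and therefore $\overline{\mathcal C} \simeq \Rep_{\C}(G)$, has only finitely many indecomposables, and then your concluding observation (a reductive group with finitely many irreducibles is finite, since $\mathcal O(G)$ is the sum of $W_i \otimes W_i^{\vee}$ over the irreducibles) finishes the argument. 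Alternatively one can make pure powers suffice by a highest\nd weight growth argument on the identity component, but that requires an argument, not faithfulness alone.
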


\begin{proof}
Suppose that ${\mathcal V}$ satisfies \eqref{i:bun} of Lemma~\ref{l:triv}.
Then class of $V$ in the representation ring of $G$ is annihilated by a 
non-zero polynomial with coefficients in ${\mathbb Z}$, because the representation ring is finite over ${\mathbb Z}$.
Thus $P \times^G V$ and hence ${\mathcal V}$ is $H$\nd finite.

Conversely if ${\mathcal V}$ is $H$\nd finite, we show that ${\mathcal V}$ satisfies \eqref{i:cov} of Lemma~\ref{l:triv}.
Denote by ${\mathcal C}$ the full pseudo-abelian rigid tensor subcategory of $\Mod_H(X)$ generated by ${\mathcal V}$.
Then $\overline{\mathcal C}$ (defined in \eqref{oc}) is semisimple Tannakian over $\C$.
The image $\overline{\mathcal V}$ of $\mathcal V$ in $\overline{\mathcal C}$ generates it as a semisimple abelian tensor category.
Thus for some (not necessarily connected) reductive algebraic group $G$ over $\C$ there is by \cite[p.~130--131, Theorem 2.11]{DMOS}
a tensor equivalence
\begin{equation}\label{e:equ}
\Rep_{\C}(G) \,\longrightarrow\, \overline{\mathcal C}.
\end{equation}
Since $\mathcal C$ contains only finitely many non-isomorphic indecomposable objects by $H$\nd finiteness of $\mathcal V$,
the same holds for $\overline{\mathcal C}$ and $\Rep_{\C}(G)$. 
Thus $G$ is finite.

By the splitting theorem, the projection ${\mathcal C}\,\longrightarrow\,
\overline{\mathcal C}$ has a right quasi-inverse
\begin{equation*}
\overline{\mathcal C} \,\longrightarrow\,{\mathcal C}.
\end{equation*}
The image of $\overline{\mathcal V}$ in ${\mathcal C}$ is isomorphic to ${\mathcal V}$, because the projection is full and reflects isomorphisms.
Composing with \eqref{e:equ} and the embedding into $\Mod_H(X)$
gives a tensor functor 
\begin{equation*}
T \,:\,\Rep_{\C}(G)\,\longrightarrow\, \Mod_H(X)
\end{equation*}
such that $\mathcal V$ is
$H$\nd isomorphic to $T(V)$ for some $V$ in $\Rep_{\C}(G)$.

Applying $T$ to \eqref{e:Gact} gives an $H$\nd equivariant isomorphism of $T(\C[G])$\nd modules
\begin{equation}\label{e:TGact}
T(V_0) \otimes_{\sO_X} T(\C[G]) \,\stackrel{\sim}{\longrightarrow}\, T(V) \otimes_{\sO_X} T(\C[G]),
\end{equation}
where $T(V_0)$ is $H$\nd trivial. Since $T$ followed by passage to the fibre at a point of $X$ is tensor 
isomorphic to the forgetful functor, the fibres of $T(\C[G])$ are isomorphic to the underlying $\C$\nd algebra of 
$\C[G]$. Hence $T(\C[G])$ is a non-zero $H$\nd equivariant finite \'etale $\sO_X$\nd algebra.
There thus exists an 
$H$\nd equivariant finite \'etale covering $a:X' \,\longrightarrow\, X$ of $X$ with $a_*\sO_{X'}$ $H$\nd 
isomorphic to $T(\C[G])$. Replacing $T(\C[G])$ by $a_*\sO_{X'}$ in $\eqref{e:TGact}$ and applying $a^*$ and then 
$a^*a_*\sO_{X'} \,\longrightarrow\, \sO_{X'}$ shows that $a^*T(V)$ and hence $a^*{\mathcal V}$ is $H$\nd trivial.
\end{proof}

It follows from Theorem~\ref{t:main} and the remarks preceding it that if $H$ and $X$ are as in Lemma~\ref{l:triv} and $X$ is reduced, 
then the full tensor subcategory of $\Mod_H(X)$ consisting of the $H$\nd finite objects is a neutral Tannakian category over $\C$, and hence 
by the $H$\nd finiteness is tensor equivalent to $\Rep_{\C}(G)$ for a profinite group $G$.
It can be shown more generally using the structure theorem of \cite[\S~7]{O} that a tensor category $\mathcal C$ as above 
is tensor equivalent to $\Rep_{\C}(G)$ for a profinite group $G$ if and only if $\End_{\mathcal C}(\I) \, = \, \C$,
every object of $\mathcal C$ satisfies a condition analogous to $H$\nd finiteness, and $\mathcal C$ is reduced, in the sense that
$h^{\otimes n} \, = \, 0$ implies $h \, = \, 0$ for every morphism $h$ of $\mathcal C$.

Much more general results than Theorem~\ref{t:main} can be obtained by combining the splitting theorem of 
\cite{AndKah}, \cite{O} and \cite{O19} with the complex analytic analogue of the well-known dictionary between 
principal bundles and tensor functors from categories of group representations to categories of vector bundles. In 
particular we obtain in this way complex analytic analogues of the results on uniqueness up to conjugacy of 
minimal reductions of principal bundles with reductive structure group proved by Bogomolov \cite[p.~401, Theorem 
2.1]{Bo} and in a more general form in \cite[Corollary 12.13]{O19}.

Similar techniques are potentially applicable in many other geometric contexts. As well as rigid analytic analogues, 
we may for example consider vector bundles or principal bundles equipped with a connection along the leaves of a 
foliated differentiable manifold on which every leafwise constant function is constant. For the analogue of 
Theorem~\ref{t:main} in the latter case, the splitting theorem again gives an algebra $R$ in an $\mathbb R$\nd 
linear tensor category of vector bundles with connection along the leaves of the foliated manifold. The fibres of 
$R$ are isomorphic to ${\mathbb R}[G]$ for a finite, but not necessarily discrete, algebraic group $G$ over $\mathbb 
R$, and the points of the required \'etale cover are the $\mathbb R$\nd points of their spectra.

%%%%%%%%%%%%%%%%%%%%%
% References
%%%%%%%%%%%%%%%%%%%%%

\end{document}